\newtheorem{thm}{Theorem}
\newtheorem{lem}[thm]{Lemma}
\newtheorem{cor}[thm]{Corollary}
\newtheorem*{expl}{Example}
\newtheorem*{thm41}{Theorem 4.1, \cite{CFP}}
\newtheorem*{thm43}{Theorem 4.3, \cite{CFP}}
\newtheorem*{thm46}{Theorem 4.6, \cite{CFP}}
\newtheorem*{thm48}{Theorem 4.8, \cite{CFP}}
\newtheorem*{cor47}{Corollary 4.7, \cite{CFP}}
\newtheorem*{cor49}{Corollary 4.9, \cite{CFP}}
\def\xa{\xy 0*=\txt{$x_0$},
 {+(6,-4)\PATH~={**@{-}}'+(0,8)'+(64,0)'-(0,8)'-(64,0)},
 {+(16,0)*{} \ar @{-} +(16,8)*{}}, {+(16,0)*{} \ar @{-}
 +(16,8)*{}},-(32,0)
 \endxy}
\def\xb{\xy 0*=\txt{$x_1$},
 {+(6,-4)\PATH~={**@{-}}'+(0,8)'+(64,0)'-(0,8)'-(64,0)},
 {+(32,0)*{}\ar @{-} +(0,8)*{}}, {+(8,0)*{}\ar @{-} +(8,8)*{}},
 {+(8,0)*{}\ar @{-} +(8,8)*{}},-(32,0)
 \endxy}
\def\xc{\xy
 0*=\txt{$x_0^{-1}$},
  {+(6,-4)\PATH~={**@{-}}'+(0,8)'+(64,0)'-(0,8)'-(64,0)},
  {+(32,0)*{} \ar @{-} +(-16,8)*{}},
  {+(16,0)*{} \ar @{-} +(-16,8)*{}},
  {+(4,0)*{} \ar @{.} +(-12,8)*{}},
  {+(4,0)*{} \ar @{.} +(-8,8)*{}},
 +(-62,12)*=\txt{$x_1$},
  {+(6,-4)\PATH~={**@{-}}'+(0,8)'+(64,0)'-(0,8)},
  {+(16,0)*{}\ar @{.} +(0,8)*{}},
  {+(16,0)*{}\ar @{-} +(0,8)*{}},
  {+(8,0)*{}\ar @{-} +(8,8)*{}},
  {+(8,0)*{}\ar @{-} +(8,8)*{}}
 \endxy}
\def\xd{\xy 0*=\txt{$x_0^{-1}x_1$},
  {+(6,-4)\PATH~={**@{-}}'+(0,8)'+(64,0)'-(0,8)'-(64,0)},
  {+(32,0)*{}\ar @{-} +(-16,8)*{}},
  {+(16,0)*{}\ar @{-} +(-16,8)*{}},
  {+(4,0)*{}\ar @{-} +(-4,8)*{}},
  {+(4,0)*{}\ar @{-} +(0,8)*{}}
 \endxy}
\def\kol{2}
\def\xe{\xy 0*{},
  {+(6,4)*{}\ar @{-} +(64,0)*{}},
  {+(0,0)*{}\ar @{-} +(0,\kol)*{}},
  {+(.5,0)*{}\ar @{-} +(0,\kol)*{}},
  {+(.5,0)*{}\ar @{-} +(0,\kol)*{}},
  {+(1,0)*{}\ar @{-} +(0,\kol)*{}},
  {+(2,0)*{}\ar @{-} +(0,\kol)*{}},
  {+(4,0)*{}\ar @{-} +(0,\kol)*{}},
  {+(8,0)*{}\ar @{-} +(0,\kol)*{}},
  {+(16,0)*{}\ar @{-} +(0,\kol)*{}},
  {+(16,0)*{}\ar @{-} +(0,\kol)*{}},
  {+(8,0)*{}\ar @{-} +(0,\kol)*{}},
  {+(4,0)*{}\ar @{-} +(0,\kol)*{}},
  {+(2,0)*{}\ar @{-} +(0,\kol)*{}},
  {+(1,0)*{}\ar @{-} +(0,\kol)*{}},
  {+(.5,0)*{}\ar @{-} +(0,\kol)*{}},
  {+(.5,0)*{}\ar @{-} +(0,\kol)*{}},
  (6,0)*=\txt{$0$},
  +(32,0)*=\txt{$\frac{1}{2}$},
  +(32,0)*=\txt{$1$},
 \endxy}
\def\xf{\xy 0*=\txt{$f$},
  {+(6,-4)*{}\ar @{-} +(64,0)*{}},
  {+(0,8)*{}\ar @{-} +(64,0)*{}},
  (6,-4),
  {+(10,0)*{}\ar @{-} +(4,8)*{}},
  +(14,0),
  +(10,0),
  +(10,0),
  {+(10,0)*{}\ar @{-} +(-16,8)*{}},
  (32,0)*=\txt{$?$},
  (20,7)*=\txt{$x_i$},
  +(14,0.5),
  +(14,-0.5)*=\txt{$x_{i+1}$},
  (16,-7)*=\txt{$y_i$},
  +(14,0.5),
  +(30,-0.5)*=\txt{$y_{i+1}$},
 \endxy}
\def\xg{\xy 0*=\txt{$f$},
  {+(6,-4)*{}\ar @{-} +(64,0)*{}},
  {+(0,8)*{}\ar @{-} +(64,0)*{}},
  (6,-4),
  {+(10,0)*{}\ar @{-} +(4,8)*{}},
  {+(14,0)*{}\ar @{-} +(4,8)*{}},
  {+(10,0)*{}\ar @{-} +(0,1)*{}},
  {+(10,0)*{}\ar @{-} +(0,1)*{}},
  {+(10,0)*{}\ar @{-} +(-16,8)*{}},
  (25,0)*=\txt{$1$},
  (40,0)*=\txt{$?$},
  (20,7)*=\txt{$x_i$},
  +(14,0.5)*=\txt{$x_i+d$},
  +(14,-0.5)*=\txt{$x_{i+1}$},
  (16,-7)*=\txt{$y_i$},
  +(14,0.5)*=\txt{$y_i+d$},
  +(30,-0.5)*=\txt{$y_{i+1}$},
 \endxy}
\def\xh{\xy 0*=\txt{$f$},
  {+(6,-4)*{}\ar @{-} +(64,0)*{}},
  {+(0,8)*{}\ar @{-} +(64,0)*{}},
  (6,-4),
  {+(10,0)*{}\ar @{-} +(4,8)*{}},
  {+(14,0)*{}\ar @{-} +(4,8)*{}},
  {+(5,0)*{}\ar @{-} +(0,1)*{}},
  {+(5,0)*{}\ar @{-} +(0,1)*{}},
  {+(5,0)*{}\ar @{-} +(0,1)*{}},
  {+(5,0)*{}\ar @{-} +(0,1)*{}},
  {+(0,0)*{}\ar @{-} +(-11,8)*{}},
  {+(5,0)*{}\ar @{-} +(0,1)*{}},
  {+(5,0)*{}\ar @{-} +(-16,8)*{}},
  (25,0)*=\txt{$1$},
  (38,0)*=\txt{$4$},
  (48,0)*=\txt{$2$},
  (20,7)*=\txt{$x_i$},
  +(14,0.5)*=\txt{$x_i+d$},
  +(14,-0.5)*=\txt{$x_{i+1}$},
  (16,-7)*=\txt{$y_i$},
  +(14,0.5)*=\txt{$y_i+d$},
  +(30,-0.5)*=\txt{$y_{i+1}$},
 \endxy}
\def\xi{\xy 0*=\txt{},
  {+(0,-4)*{}\ar @{-} +(28,0)*{}},
  {+(0,8)*{}\ar @{-} +(28,0)*{}},
  +(0,-8),
  {+(0,0)*{}\ar @{~} +(0,8)*{}},
  -(0,3.5)*=\txt{$0$},+(0,3.5),
  {+(8,0)*{}\ar @{.} +(0,8)*{}},
  -(0,3.5)*=\txt{$\beta_1$},+(0,3.5),
  {+(8,0)*{}\ar @{.} +(0,8)*{}},
  -(0,3.5)*=\txt{$\beta_2$},+(0,3.5),
  {+(8,0)*{}\ar @{.} +(0,8)*{}},
  -(0,3.5)*=\txt{$\beta_3$},+(0,3.5),
  +(8,4)*=\txt{$\dots$},
  {+(4,-4)*{}\ar @{-} +(20,0)*{}},
  {+(0,8)*{}\ar @{-} +(20,0)*{}},
  +(0,-8),
  {+(4,0)*{}\ar @{.} +(0,8)*{}},
  -(0,3.5)*=\txt{$\beta_k$},+(0,3.5),
  {+(8,0)*{}\ar @{.} +(0,8)*{}},
  -(0,3.5)*=\txt{$\beta_{k+1}$},+(0,3.5),
  {+(8,0)*{}\ar @{-} +(0,8)*{}},
  -(0,3.5)*=\txt{$1$},+(0,3.5)
 \endxy}
\def\xj{\xy 0*=\txt{$a$},
  {+(10,-4)*{}\ar @{-} +(48,0)*{}},
  {+(0,8)*{}\ar @{-} +(48,0)*{}},
  +(0,-8),
  {+(0,0)*{}\ar @{~} +(0,8)*{}},
  -(0,3.5)*=\txt{$0$},+(0,3.5),
  {+(8,0)*{}\ar @{.} +(0,8)*{}},
  -(0,3.5)*=\txt{$\beta_1$},+(0,3.5),
  {+(8,0)*{}\ar @{.} +(0,8)*{}},
  {+(0,0)*{}\ar @{-} +(-8,8)*{}},
  -(0,3.5)*=\txt{$\beta_2$},+(0,3.5),
  {+(8,0)*{}\ar @{.} +(0,8)*{}},
  {+(0,0)*{}\ar @{-} +(8,8)*{}},
  -(0,3.5)*=\txt{$\beta_3$},+(0,3.5),
  {+(8,0)*{}\ar @{.} +(0,8)*{}},
  -(0,3.5)*=\txt{$\beta_4$},+(0,3.5),
  {+(8,0)*{}\ar @{-} +(0,8)*{}},
  -(0,3.5)*=\txt{$\beta_5$},+(0,3.5),
  {+(8,0)*{}\ar @{-} +(0,8)*{}},
  -(0,3.5)*=\txt{$1$},+(0,3.5),
 \endxy}
\def\xk{\xy 0*=\txt{$b$},
  {+(10,-4)*{}\ar @{-} +(48,0)*{}},
  {+(0,8)*{}\ar @{-} +(48,0)*{}},
  +(0,-8),
  {+(0,0)*{}\ar @{~} +(0,8)*{}},
  -(0,3.5)*=\txt{$0$},+(0,3.5),
  {+(8,0)*{}\ar @{-} +(0,8)*{}},
  -(0,3.5)*=\txt{$\beta_1$},+(0,3.5),
  {+(8,0)*{}\ar @{.} +(0,8)*{}},
  -(0,3.5)*=\txt{$\beta_2$},+(0,3.5),
  {+(8,0)*{}\ar @{.} +(0,8)*{}},
  {+(0,0)*{}\ar @{-} +(-8,8)*{}},
  -(0,3.5)*=\txt{$\beta_3$},+(0,3.5),
  {+(8,0)*{}\ar @{.} +(0,8)*{}},
  {+(0,0)*{}\ar @{-} +(8,8)*{}},
  -(0,3.5)*=\txt{$\beta_4$},+(0,3.5),
  {+(8,0)*{}\ar @{.} +(0,8)*{}},
  -(0,3.5)*=\txt{$\beta_5$},+(0,3.5),
  {+(8,0)*{}\ar @{-} +(0,8)*{}},
  -(0,3.5)*=\txt{$1$},+(0,3.5)
 \endxy}
\def\xl{\xy 0*=\txt{$b^{-1}$},
  +(0,8)*=\txt{$a^{-1}$},
  +(0,8)*=\txt{$b$},
  +(0,8)*=\txt{$a$},
  {+(10,-28)*{}\ar @{-} +(48,0)*{}},
  {+(0,8)*{}\ar @{-} +(48,0)*{}},
  {+(0,8)*{}\ar @{-} +(48,0)*{}},
  {+(0,8)*{}\ar @{-} +(48,0)*{}},
  {+(0,8)*{}\ar @{-} +(48,0)*{}},
  {+(0,-32.5)*{}\ar @{~} +(0,32.5)*{}}, +(0,0.5),
  -(0,3.5)*=\txt{$0$},+(0,3.5),
  {+(8,0)*{}\ar @{.} +(0,32)*{}},
  {+(0,0)*{}\ar @{-} +(0,8)*{}},
  {+(0,8)*{}\ar @{-} +(8,8)*{}},
  {+(0,8)*{}\ar @{-} +(0,8)*{}},-(0,16)
  -(0,3.5)*=\txt{$\beta_1$},+(0,3.5),
  {+(8,0)*{}\ar @{.} +(0,32)*{}},
  {+(0,0)*{}\ar @{-} +(8,8)*{}},
  -(0,3.5)*=\txt{$\beta_2$},+(0,3.5),
  {+(8,0)*{}\ar @{.} +(0,32)*{}},
  {+(0,24)*{}\ar @{-} +(8,8)*{}},-(0,24),
  -(0,3.5)*=\txt{$\beta_3$},+(0,3.5),
  {+(8,0)*{}\ar @{.} +(0,32)*{}},
  {+(0,16)*{}\ar @{-} +(8,8)*{}},-(0,16),
  -(0,3.5)*=\txt{$\beta_4$},+(0,3.5),
  {+(8,0)*{}\ar @{.} +(0,32)*{}},
  {+(0,8)*{}\ar @{-} +(0,8)*{}},-(0,8),
  {+(0,24)*{}\ar @{-} +(0,8)*{}},-(0,24),
  {+(0,0)*{}\ar @{-} +(-32,32)*{}},
  -(0,3.5)*=\txt{$\beta_5$},+(0,3.5),
  {+(8,0)*{}\ar @{-} +(0,32)*{}},
  -(0,3.5)*=\txt{$1$},+(0,3.5)
 \endxy}
\title{On identities in Thompson's group}
\author[E. Esyp]{Evgenii S. Esyp}
\address{Evgenii Esyp,
Omsk Branch of Institute of Mathematics (SB RAS),
13 Pevtsova St., Omsk, 644099, Russia}
\email{esyp@iitam.omsk.net.ru}
\begin{document}
\maketitle

\section{Introduction}
Thompson's group $F$ is the group of all  piecewise linear orientation-preserving homeomorphisms of the unit interval $[0, 1]$ with finitely many breakpoints such that:
\begin{enumerate}
 \item all slopes are powers of $2$;
 \item all breakpoints are in $\mathbb{Z}[\frac{1}{2}]$, the ring of dyadic rational numbers.
\end{enumerate}

The group $F$ is described by the following presentation:
$$
F = \langle x_0, x_1, x_2 \dots \mid x_i ^ {-1} x_k x_i=x_{k+1} (k> i) \rangle.
$$
One can show that the group $F$ can also be given by the following presentation
$$
F=\langle x_0, x_1 \mid [x_0x_1 ^ {-1}, x_0 ^ {-1} x_1x_0] = [x_0x_1 ^ {-1}, x_0 ^ {-2} x_1x_0^2] =1\rangle.
$$

The Thompson's group $F$ posses many interesting properties, among which we shall only mention a few. We refer the reader to \cite{CFP} and references there for a detailed account.

\begin{lem} \label{lem1}
If $0=x_0<x_1<x_2<\dots <x_n=1$ and $0=y_0<y_1<y_2<\dots y_n=1$ are two partitions of $[0,1]$ consisting of dyadic rational numbers, then there exists a piecewise linear homeomorphism $f$ of the unit interval $[0,1]$ such that $f(x_i)=y_i$ for $i=0,\dots ,n$, and $f$ is an element of $F$.
\end{lem}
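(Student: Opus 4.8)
The plan is to build $f$ block by block: on each subinterval $[x_{i-1},x_i]$ I will construct an orientation-preserving piecewise linear homeomorphism onto $[y_{i-1},y_i]$ all of whose slopes are powers of $2$ and all of whose breakpoints are dyadic, and then concatenate these pieces. Because each $x_i$ and each $y_i$ is dyadic, the glued map $f$ is automatically continuous, strictly increasing, and satisfies $f(x_i)=y_i$ by construction; it will lie in $F$ precisely when every piece has power-of-$2$ slope and dyadic breakpoints. Thus the whole problem reduces to a single-block statement: for dyadic $a<b$ and dyadic $c<d$ there exists such a homeomorphism $[a,b]\to[c,d]$.

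To prove the single-block statement, the key device is subdivision into \emph{standard dyadic intervals}, i.e. intervals of the form $[j2^{-r},(j+1)2^{-r}]$. First I would note that any dyadic interval can be so subdivided: writing $a=p2^{-k}$ and $b=q2^{-k}$ over a common denominator $2^{k}$, the interval $[a,b]$ is exactly the union of the $q-p$ standard dyadic intervals of length $2^{-k}$ that it contains. Doing the same for $[c,d]$ yields subdivisions into, say, $p'$ and $q'$ standard pieces respectively.

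The crucial observation is that an affine map between two intervals of lengths $2^{-r}$ and $2^{-s}$ has slope $2^{r-s}$, a power of $2$, and sends dyadic endpoints to dyadic endpoints. Hence if the two subdivisions had equally many pieces I could simply map the $j$-th piece of $[a,b]$ affinely onto the $j$-th piece of $[c,d]$ and finish. The one thing that can go wrong is that $p'\neq q'$, and reconciling the piece-counts is the step that needs care and is really the heart of the argument. I would handle it by repeated bisection: bisecting a standard dyadic interval produces two standard dyadic intervals and raises the piece-count by exactly one, so I can refine whichever subdivision has fewer pieces until both have $\max(p',q')$ pieces, all still standard dyadic. Matching the pieces in order then produces the required map.

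Finally I assemble the global map by applying this single-block construction to each pair $([x_{i-1},x_i],[y_{i-1},y_i])$ and concatenating. The resulting $f$ has finitely many breakpoints, all dyadic, all slopes powers of $2$, and $f(x_i)=y_i$ for every $i$, so $f\in F$. The only genuinely nontrivial ingredient is the interplay between power-of-$2$ lengths and power-of-$2$ slopes, together with the bisection trick that equalizes the number of pieces without ever leaving the class of standard dyadic subdivisions; everything else is bookkeeping.
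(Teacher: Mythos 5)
Your proof is correct, but it follows a genuinely different route from the one in the paper (Appendix A). The two arguments share the outer reduction: both build $f$ block by block on each $[x_i,x_{i+1}]$ and concatenate, so everything comes down to producing a single block map $[a,b]\to[c,d]$ between dyadic intervals. The paper does this by arithmetic: writing the lengths as $c_1=z_1 2^{-(j_1+j_2)}$ and $c_2=z_2 2^{-(j_1+j_2)}$ with $z_1,z_2$ integers and (WLOG) $c_1<c_2$, it first maps an initial segment of length $d=(z_1-1)2^{-(j_1+j_2)}$ with slope $1$, chosen precisely so that the two remaining pieces have \emph{integer} length ratio $z_2-z_1+1$; it then writes this integer as a sum of $n$ powers of $2$, writes $1$ as a sum of $n$ negative powers of $2$, and subdivides the two remaining pieces proportionally to these decompositions, matching them term by term so that every slope is a power of $2$. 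You instead refine both $[a,b]$ and $[c,d]$ into standard dyadic intervals, equalize the piece counts by repeated bisection (each bisection of a standard dyadic interval yields two standard dyadic intervals and adds exactly one piece), and map pieces to pieces affinely, the slopes then being powers of $2$ automatically. Both arguments are complete and elementary. Yours is the classical one --- it is essentially the argument via standard dyadic partitions given in \cite{CFP} --- and is arguably cleaner: it needs no case distinction over which interval is shorter, no explicit computation with $z_1$ and $z_2$, and the notion of standard dyadic partition it rests on is exactly the structure underlying tree-pair diagrams for elements of $F$. What the paper's construction buys is explicitness and self-containedness: it exhibits the block map by a direct formula for its breakpoints and slopes, without invoking or re-proving any refinement machinery.
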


We shall make use of the following results on the Thompson's group by $F$, which can be found in \cite{CFP}.

\begin{thm41}
The derived subgroup $[F,F]$ of $F$ consists of all elements in $F$ which are trivial in neighborhoods of $0$ and $1$. Furthermore, $F/[F,F]\cong\mathbb{Z}\oplus\mathbb{Z}$
\end{thm41}

\begin{thm43}
Every proper quotient group of $F$ is Abelian.
\end{thm43}

\begin{thm46}
The submonoid of $F$ generated by $x_0$, $x_1$, $x_1^{-1}$ is the free product of the submonoid generated by $x_0$ and the subgroup generated by $x_1$.
\end{thm46}

\begin{cor47}
Thompson's Group $F $ has exponential growth.
\end{cor47}

\begin{thm48}
Every non-Abelian subgroup of $F$ contains a free Abelian subgroup of infinite rank.
\end{thm48}

\begin{cor49}
Thompson's group $F$ does not contain a non-Abelian free group.
\end{cor49}

The aim of this paper is to prove that Thompson's group $F$ is close to a free group in that it does not satisfy an identity.

\begin{thm}\label{thm1}
For any natural numbers $n$ and $k$ there exist $n$ elements $a_1, \dots, a_n$ in Thompson's group $F$, such that no relation involving $n$ variables and of length less than $k$ is satisfied by $a_1, \dots, a_n$.
\end{thm}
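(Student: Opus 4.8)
The plan is to fix $n$ and $k$ and to observe that there are only finitely many nontrivial reduced words $w_1,\dots,w_m$ in the free group $F_n=\langle t_1,\dots,t_n\rangle$ of length less than $k$. It then suffices to produce a single tuple $a_1,\dots,a_n\in F$ for which $w_j(a_1,\dots,a_n)\neq 1$ for every $j$; this is exactly the assertion that the substitution homomorphism $F_n\to F$, $t_i\mapsto a_i$, kills no nontrivial word of length $<k$. I would split the words $w_j$ into two families according to their image under abelianization and handle each by a different invariant, then build the $a_i$ to meet both sets of requirements simultaneously.

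First family: words $w_j$ whose exponent-sum vector $e(w_j)=(e_1,\dots,e_n)\in\mathbb{Z}^n$ is nonzero. Using the homomorphism $\phi\colon F\to F/[F,F]\cong\mathbb{Z}\oplus\mathbb{Z}$ supplied by Theorem 4.1 --- concretely, $\phi(f)$ records $\log_2$ of the slopes of $f$ in neighborhoods of $0$ and of $1$ --- one has $\phi\bigl(w_j(\vec a)\bigr)=\bigl(\sum_i e_i p_i,\ \sum_i e_i q_i\bigr)$, where $2^{p_i}$ and $2^{q_i}$ are the germ slopes of $a_i$ at $0$ and at $1$. Each nonzero $e(w_j)$ excludes the hyperplane $\{p:\ \sum_i e_i p_i=0\}$, so a lacunary choice such as $p_i=M^i$ with $M$ large avoids all finitely many conditions at once, forcing $\phi(w_j(\vec a))\neq 0$ and hence $w_j(\vec a)\neq 1$. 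Realizing an element of $F$ with any prescribed germ slopes at the endpoints is immediate (for instance from Lemma~\ref{lem1}), and these constraints involve only the behavior of $a_i$ near $0$ and $1$.

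Second family: words $w_j$ lying in the commutator subgroup of $F_n$, that is, with $e(w_j)=0$. These are invisible to every homomorphism into an Abelian group, so no germ or slope datum can detect them and a genuinely dynamical argument on the interior of $[0,1]$ is required. Here I would prescribe the $a_i$ on a chosen interior dyadic subinterval $[\delta,1-\delta]$, disjoint from the endpoint regions used above, so that they act on a fine dyadic partition $\delta=\beta_0<\beta_1<\dots<\beta_N=1-\delta$ with distinct, generic breakpoint patterns, each interior action being realized in $F$ via Lemma~\ref{lem1}. For a fixed word $w_j=s_1^{\varepsilon_1}\cdots s_\ell^{\varepsilon_\ell}$ of length $\ell<k$ one tracks the successive images of a well-chosen partition point and argues, by following the composition factor by factor exactly as in the stacked diagrams for $a,b,a^{-1},b^{-1}$, that the tracked point cannot return to its start. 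Taking the partition fine enough and the supports nested deeply enough, as functions of $k$, guarantees that the at most $k-1$ factors never have the room to cancel back to the identity.

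The main obstacle is precisely this second family. Because $F$ contains no non-Abelian free subgroup (Corollary 4.9), the tuple $a_1,\dots,a_n$ cannot generate anything free, so whatever freeness the construction exhibits must be \emph{transient}: it can hold only for words up to the prescribed length $k$ and is necessarily destroyed at larger length. This is why $k$ must be fixed in advance and the generators allowed to depend on it. Making the non-cancellation argument uniform over all reduced words of length $<k$ with a single generator tuple --- rather than choosing convenient elements word by word --- is the technical heart of the proof, and is where the explicit partition-tracking computations are needed.
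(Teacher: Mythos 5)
Your reduction to finitely many words and your treatment of the first family (words with nonzero exponent-sum vector, detected through $F/[F,F]\cong\mathbb{Z}\oplus\mathbb{Z}$ by a lacunary choice of endpoint slopes) are fine. The gap is the second family: for words lying in the commutator subgroup of the free group you describe what a construction should achieve but give no construction. The paper's stacked-diagram argument builds the maps \emph{from the word}: for a single reduced word $w=u_\ell\cdots u_1$ one decrees $u_1(\beta_1)=\beta_2,\dots,u_\ell(\beta_\ell)=\beta_{\ell+1}$, and this set of partial assignments is consistent (single-valued and increasing) precisely because each point is used once and $w$ is reduced. For several words at once the assignments demanded by different words conflict --- one word may require $a(\beta_1)=\beta_2$ while another requires $a(\beta_1)=\beta_3$ --- so nothing can be done ``exactly as in the stacked diagrams''; phrases like ``distinct, generic breakpoint patterns'' and ``supports nested deeply enough'' name the desired conclusion, not a mechanism that produces it. You yourself flag this uniformity over all words of length less than $k$ as the technical heart of the proof, and it is exactly the part the proposal leaves undone.

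The paper never needs a uniform construction, and this is the idea you are missing. Lemma \ref{lem2} combines the finite set $B_k=\{f_1,\dots,f_{m_k}\}$ of reduced words of length less than $k$ into a \emph{single} nontrivial word: $h_1=f_1$; then $h_i=[h_{i-1},f_i]$ when this commutator is nontrivial in the free group, and $h_i=w^{\alpha\beta}$ when $h_{i-1}=w^{\alpha}$ and $f_i=w^{\beta}$ are powers of a common word $w$. By construction, $h_{m_k}(a_1,\dots,a_n)\neq 1$ forces $f_i(a_1,\dots,a_n)\neq 1$ for every $i$, so the one-word construction of Lemma \ref{lem4} applied to $h_{m_k}$ alone already produces the tuple required by Theorem \ref{thm1}; uniformity is bought by a free-group trick, not by dynamics. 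Alternatively, your direct route can be repaired: allot to each word $w_j$ its own dyadic subinterval $I_j$, with the $I_j$ pairwise disjoint, run the paper's per-word construction inside $I_j$, and apply Lemma \ref{lem1} once to the union of the resulting partial maps, which is increasing because the intervals are disjoint; in effect one uses that $F$ contains a direct product of copies of itself supported on disjoint subintervals. With either repair your abelianization split becomes superfluous, since the per-word construction handles every nontrivial reduced word regardless of its exponent sums.
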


In particular, if $n=2 $ we get the following

\begin{cor}
For any positive integer $k$ there exists a pair of elements $a,b$ in Thompson's group $F$, such that no relation of length less than $k$ is satisfied by $a,b$.
\end{cor}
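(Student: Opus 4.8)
The corollary is the case $n=2$ of Theorem~\ref{thm1}, so the real task is to prove the theorem, from which the stated pair $a,b$ is obtained by setting $n=2$. The plan is to reduce to a single-word statement, solve that by a dynamical orbit argument, and then merge the resulting elements on disjoint subintervals. First I would note that there are only finitely many reduced words in $x_1^{\pm1},\dots,x_n^{\pm1}$ of length less than $k$; call them $w_1,\dots,w_N$. Since any relation of length less than $k$ reduces freely to either the empty word or to some $w_r=1$, it suffices to produce one tuple $a_1,\dots,a_n\in F$ with $w_r(a_1,\dots,a_n)\neq 1$ for every $r$.

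The core step is the claim that for each nontrivial reduced word $w$ there exist $b_1,\dots,b_n\in F$ with $w(b_1,\dots,b_n)\neq 1$. I would prove this by forcing a single dyadic point along a nontrivial orbit. Writing $w=s_\ell\cdots s_1$ with $s_j=x_{i_j}^{\epsilon_j}$, I pick a dyadic $p_0\in(0,1)$ and build $p_1,\dots,p_\ell$ greedily, requiring the letter $s_j$ to send $p_{j-1}$ to $p_j$, so that $w(b_1,\dots,b_n)(p_0)=p_\ell$. Each occurrence of $x_i^{\pm1}$ imposes a single input--output pair on $b_i$, and at step $j$ I choose $p_j$ to be a \emph{fresh} dyadic number, distinct from $p_0,\dots,p_{j-1}$, lying in the open interval that keeps all pairs currently prescribed for $b_i$ order-preserving and injective. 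Such an interval is a nonempty gap in a densely ordered set, hence contains infinitely many dyadics, so a fresh choice is always available. All the $p_j$ are then distinct, whence $p_\ell\neq p_0$ and $w(b_1,\dots,b_n)\neq\mathrm{id}$; finally each prescribed order-preserving dyadic partial map, augmented with $0\mapsto0$ and $1\mapsto1$, extends to an element $b_i\in F$ by Lemma~\ref{lem1}.

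The one point that must be checked carefully, and which I expect to be the main obstacle, is that the greedy step never gets stuck, i.e.\ that $p_j$ is genuinely free rather than already forced by an earlier occurrence of $b_{i_j}$. A value would be forced only if the relevant endpoint already lay in the domain (for $\epsilon_j=+1$) or range (for $\epsilon_j=-1$) of $b_{i_j}$; since all previously chosen points are distinct, a short case analysis shows this can occur only when $s_j$ and $s_{j-1}$ involve the same generator with opposite signs, that is when $s_j=s_{j-1}^{-1}$. This is precisely excluded because $w$ is reduced, so reducedness is exactly the hypothesis that makes the construction go through; carrying out this case analysis for both signs of $\epsilon_j$ is the heart of the argument.

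It then remains to assemble a single tuple. I choose $M$ with $2^M\ge N$ and let $J_1,\dots,J_N$ be distinct subintervals among the $2^M$ equal dyadic pieces of $[0,1]$. For $w_r$ let $b_1^{(r)},\dots,b_n^{(r)}$ be elements furnished by the claim, and define $a_i$ to equal the copy of $b_i^{(r)}$ transported to $J_r$ by the dyadic affine homeomorphism $[0,1]\to J_r$ on each $J_r$, and the identity elsewhere. Conjugation by a dyadic affine map preserves dyadic breakpoints and powers-of-$2$ slopes, and there are only finitely many pieces, so each $a_i\in F$. Since every $a_i$ preserves $J_r$ and acts there as the transported $b_i^{(r)}$, the restriction of $w_r(a_1,\dots,a_n)$ to $J_r$ is the transported copy of $w_r(b_1^{(r)},\dots,b_n^{(r)})$, which is nontrivial; hence $w_r(a_1,\dots,a_n)\neq1$ for every $r$, proving the theorem and, with $n=2$, the corollary.
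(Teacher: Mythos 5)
Your proposal is correct, and its per-word core is essentially the same construction the paper uses: to defeat a single reduced word you push a dyadic point along an orbit one letter at a time, recording finitely many input--output pairs for each generator, observe that reducedness is exactly what prevents a newly required value from colliding with an earlier prescription, and then extend the resulting order-preserving dyadic partial maps to elements of $F$ by Lemma~\ref{lem1} (the paper does this with a fixed increasing chain $\beta_1<\dots<\beta_{k+1}$, you with greedily chosen ``fresh'' points; the difference is immaterial, and your case analysis for when a value is forced does go through). Where you genuinely diverge is in how one tuple is made to kill \emph{all} short words simultaneously. The paper does this abstractly: Lemma~\ref{lem2} shows, for an arbitrary group, that ``no identity in $n$ variables'' is equivalent to the finite-length statement, by assuming the contrary, listing the finitely many reduced words $f_1,\dots,f_{m_k}$ of length less than $k$, and splicing them into a single nontrivial word via iterated commutators $h_i=[h_{i-1},f_i]$ (with the degenerate case $[h_{i-1},f_i]=1$ in the free group handled by powers of a common root); this word would be an identity on $G$, a contradiction. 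The paper also invokes Lemma~\ref{lem3} (substituting generators of a rank-$n$ free subgroup of $F(x,y)$) to handle $n$ variables. You instead exploit the structure of $F$ itself: conjugate the witnesses for each $w_r$ into disjoint dyadic subintervals $J_r$ and glue, so that $w_r(a_1,\dots,a_n)$ restricted to $J_r$ is a conjugate of the nontrivial $w_r(b^{(r)})$. Both routes are sound. The paper's Lemma~\ref{lem2} buys generality --- it is an equivalence valid for any group, independent of $F$ --- while your disjoint-support assembly buys explicitness: it is constructive rather than by contradiction, it avoids the free-group bookkeeping of Lemma~\ref{lem2}, and it dispenses with Lemma~\ref{lem3} since your orbit argument handles any number of variables directly; the price is that it relies on a special property of $F$ (copies of itself supported on disjoint dyadic intervals) rather than on group-theoretic generalities.
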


Our proof of Theorem \ref{thm1} consists of the following three lemmas

\begin{lem}\label{lem2}
Let $G$ be a finitely generated group. For any positive integer $n$ the following two conditions are equivalent:
\begin{enumerate}
 \item \label{lem2it1}for any positive integer $k$ there exist $n$ elements  $a_{k,1},\dots,a_{k,n}$ of $G$ such that the tuple $a_{k,1},\dots,a_{k,n}$ does not satisfy any relation of length less than $k$;
 \item \label{lem2it2}the group $G$ satisfies no identity in $n$ variables.
\end{enumerate}
\end{lem}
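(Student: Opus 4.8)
The plan is to prove the equivalence of the two conditions in Lemma~\ref{lem2} by establishing each implication separately, and the main point is to understand precisely what ``satisfies an identity'' versus ``satisfies a relation'' means. An \emph{identity} in $n$ variables is a nontrivial reduced word $w(t_1,\dots,t_n)$ in the free group on $t_1,\dots,t_n$ such that $w(g_1,\dots,g_n)=1$ for \emph{every} choice of $g_1,\dots,g_n\in G$; a tuple $a_1,\dots,a_n$ \emph{satisfies a relation} $w$ if $w(a_1,\dots,a_n)=1$ for that particular tuple, and the ``length'' of the relation is the length of the reduced word $w$.

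First I would prove the contrapositive of the direction (\ref{lem2it1})$\Rightarrow$(\ref{lem2it2}), which is the easy implication. Suppose $G$ does satisfy some identity; then there is a nontrivial reduced word $w$ of some fixed finite length, say $\ell$, with $w(g_1,\dots,g_n)=1$ for all tuples. Then for $k=\ell+1$, \emph{every} tuple $a_{k,1},\dots,a_{k,n}$ satisfies the relation $w$, which has length $\ell<k$, so condition (\ref{lem2it1}) fails. Contrapositively, if (\ref{lem2it1}) holds then $G$ satisfies no identity, giving (\ref{lem2it2}).

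The reverse direction (\ref{lem2it2})$\Rightarrow$(\ref{lem2it1}) is where the hypothesis that $G$ is \emph{finitely generated} becomes essential, and this is the step I expect to be the main obstacle. Suppose for contradiction that (\ref{lem2it1}) fails: there is a fixed $k$ such that \emph{every} tuple $a_1,\dots,a_n\in G$ satisfies some relation of length less than $k$. The key observation is that there are only finitely many reduced words of length less than $k$ in $n$ variables --- call this finite set $W=\{w_1,\dots,w_m\}$. Thus for every tuple there exists an index $j$ with $w_j(a_1,\dots,a_n)=1$. I would like to conclude that some single $w_j$ is satisfied by \emph{all} tuples (which is exactly an identity of length $<k$, contradicting (\ref{lem2it2})), but the choice of $j$ a priori depends on the tuple. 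To rule out this dependence I would argue by a covering/pigeonhole argument: for each $w_j$, let $V_j=\{(a_1,\dots,a_n)\in G^n: w_j(a_1,\dots,a_n)=1\}$ be the set of tuples satisfying $w_j$; the assumption says $G^n=\bigcup_{j=1}^m V_j$. The task is to show that the whole of $G^n$ being covered by these finitely many solution-sets forces one $V_j$ to be everything.

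The cleanest way to close this gap, and the step I would develop most carefully, is to reduce the question to a single ``universal'' identity by multiplying words together or by invoking that $G^n$ cannot be a finite union of proper ``word-varieties'' unless one of them is improper. Concretely, I would consider the finitely many words $w_1,\dots,w_m$ and build from them, using the finite generation of $G$ to make a counting/diagonalization argument, a single nontrivial word $w$ that must vanish on all tuples: one natural approach is to note that since each $w_j$ that is \emph{not} an identity admits at least one tuple $\bar{a}^{(j)}$ on which it is nontrivial, and these finitely many witnessing tuples can be combined (by placing them into disjoint coordinates and exploiting finite generation to realize them simultaneously) into a single tuple on which every non-identity $w_j$ is simultaneously nonzero, contradicting that \emph{some} $w_j$ must vanish on it. This forces at least one $w_j$ to be a genuine identity of length less than $k$, contradicting (\ref{lem2it2}) and completing the proof.
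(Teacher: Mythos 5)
Your first implication is fine and matches the paper. The gap is in the direction (\ref{lem2it2})$\Rightarrow$(\ref{lem2it1}). You correctly isolate the difficulty --- the index $j$ of the vanishing word depends on the tuple --- but your proposed fix does not work. You want to show that some single $w_j$ in the covering $G^n=\bigcup_j V_j$ is itself an identity, by ``combining'' witnessing tuples $\bar a^{(j)}$ (one for each non-identity $w_j$) into a single tuple on which all the non-identity $w_j$ are simultaneously nonvanishing. There is no such combination: the words $w_j$ are words in the \emph{same} $n$ variables, so witnesses cannot be ``placed into disjoint coordinates'', and finite generation gives no handle here (in fact the paper's own proof never uses finite generation at all). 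Worse, your intermediate claim is false in general: a group can be covered by finitely many \emph{proper} word-varieties. For instance, in $S_3$ one has $S_3=V_{x^2}\cup V_{x^3}$, where each of $x^2$ and $x^3$ admits a witness (a $3$-cycle, respectively a transposition), yet there is no common witness and neither word is an identity on $S_3$. Of course $S_3$ satisfies other identities, so this does not contradict the lemma --- but it does kill any argument that tries to promote one of the covering words to an identity, or to produce a common witness without genuinely using hypothesis (\ref{lem2it2}).

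The paper closes this gap differently: instead of seeking a common witness, it manufactures from the finite set $B_k=\{f_1,\dots,f_{m_k}\}$ of reduced words of length less than $k$ a single new nontrivial word that vanishes on a tuple as soon as \emph{any one} of the $f_i$ does. Inductively, $h_1=f_1$, and $h_i=[h_{i-1},f_i]$ whenever this commutator is nontrivial in the free group; if instead $[h_{i-1},f_i]=1$ in the free group, then $h_{i-1}$ and $f_i$ are powers $w^{\alpha}$, $w^{\beta}$ of a common word $w$, and one sets $h_i=w^{\alpha\beta}$. In either case $h_i$ is a nontrivial element of the free group, and $h_i(\bar a)=1$ whenever $h_{i-1}(\bar a)=1$ or $f_i(\bar a)=1$. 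Since by assumption every tuple kills some $f_i$, the final word $h_{m_k}$ vanishes on all of $G^n$, i.e.\ it is an identity in $n$ variables (of length possibly much larger than $k$, which is harmless since condition (\ref{lem2it2}) excludes identities of every length), contradicting (\ref{lem2it2}). This commutator/common-power construction is exactly the idea missing from your proposal.
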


\begin{lem}\label{lem3}
Let $G$ be a group. If the group $G$ satisfies an identity $f(x_1, \dots, x_n)$ in $n$ variables, then the group $G$ satisfies an identity $g$ in $2$ variables.
\end{lem}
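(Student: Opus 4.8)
The plan is to substitute, for the $n$ variables of the given identity, suitable words in only two variables that freely generate a free subgroup of rank $n$. Recall that an \emph{identity in $n$ variables} satisfied by $G$ is a nontrivial reduced word $f$ in the free group on $x_1, \dots, x_n$ that maps to the identity of $G$ under every homomorphism; equivalently $f(g_1, \dots, g_n) = 1$ for all $g_1, \dots, g_n \in G$.

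First I would recall the classical fact that the free group on two generators $y_1, y_2$ contains a free subgroup of rank $n$. Concretely, set $w_i = y_1^{\,i-1} y_2 y_1^{-(i-1)}$ for $i = 1, \dots, n$. These are the first $n$ of the standard free generators of the kernel of the homomorphism onto $\mathbb{Z}$ that kills $y_2$ and sends $y_1 \mapsto 1$; by the Nielsen--Schreier theorem, applied with the Schreier transversal $\{y_1^{\,j} : j \in \mathbb{Z}\}$, the elements $y_1^{\,j} y_2 y_1^{-j}$ freely generate this kernel, so in particular $w_1, \dots, w_n$ freely generate a free subgroup of rank $n$. Consequently the assignment $x_i \mapsto w_i$ extends to an \emph{injective} homomorphism $\varphi$ from the free group on $x_1, \dots, x_n$ into the free group on $y_1, y_2$.

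Next I would define $g(y_1, y_2) := \varphi(f) = f(w_1, \dots, w_n)$, a word in the two variables $y_1, y_2$. Two things then remain to be checked. The word $g$ is \emph{nontrivial}: since $f \neq 1$ in the free group on $x_1, \dots, x_n$ and $\varphi$ is injective, $g = \varphi(f) \neq 1$, so $g$ is a genuine identity and not the trivial relation. And $G$ \emph{satisfies} $g$: given any $a, b \in G$, the elements $w_i(a,b)$ lie in $G$, and since $G$ satisfies the identity $f$ we obtain $g(a,b) = f\bigl(w_1(a,b), \dots, w_n(a,b)\bigr) = 1$. Thus $g$ is an identity in two variables satisfied by $G$.

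The only substantive point, and the one I expect to require the most care, is the verification that $w_1, \dots, w_n$ really do freely generate a free subgroup — that is, that $\varphi$ is injective — since it is precisely injectivity that guarantees $g$ is nontrivial and hence a bona fide identity. Everything else is a formal substitution.
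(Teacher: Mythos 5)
Your proposal is correct and follows essentially the same route as the paper: substitute for the $n$ variables words $w_1(x,y), \dots, w_n(x,y)$ that freely generate a free subgroup of rank $n$ in the free group $F(x,y)$, so that the induced homomorphism is injective and $g = f(w_1, \dots, w_n)$ is a nontrivial identity in two variables satisfied by $G$. You merely make explicit what the paper leaves implicit, namely a concrete choice of the $w_i$ (the conjugates $y_1^{\,i-1} y_2 y_1^{-(i-1)}$) and the Nielsen--Schreier justification that they generate freely.
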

\begin{proof}
Let $f$ be an identity on $G$. Set $g (x, y) =f(w_1 (x, y), w_2 (x, y), \dots, w_n (x, y))$, where the words
$w_1 (x, y), w_2 (x, y), \dots, w_n (x, y)$ generate the free subgroup of rank $n$ in the free group $F(x,y)$. It follows that $g(x,y)$ is a nontrivial identity.
\end{proof}

\begin{lem}\label{lem4}
Thompson's group $F$ does not satisfy an identity.
\end{lem}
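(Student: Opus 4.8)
The plan is to reduce to two variables and then defeat a single word by a direct dynamical construction on the interval. By Lemma \ref{lem3}, if $F$ satisfied any identity in $n$ variables it would satisfy one in two variables; the contrapositive shows that it suffices to prove that $F$ satisfies no identity in the two variables $x,y$. So I would assume for contradiction a nontrivial reduced word $w(x,y)\in F(x,y)$ with $w(a,b)=1$ for all $a,b\in F$, and aim to exhibit a single pair $a,b\in F$ with $w(a,b)\ne 1$, which by Lemma \ref{lem2} I am free to choose depending on $w$.

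First I would use that $F$ acts faithfully by homeomorphisms on $[0,1]$, so that $w(a,b)\ne 1$ is equivalent to the existence of a point $t$ with $w(a,b)(t)\ne t$. Writing $w=s_L\cdots s_2s_1$ as a reduced product of letters $s_i\in\{x^{\pm1},y^{\pm1}\}$, the value $w(a,b)(t)$ is computed by applying the homeomorphisms $a^{\pm1},b^{\pm1}$ to $t$ one letter at a time. The goal is to choose $a,b$ and a starting point $t$ so that the finite trajectory $t_0=t,\ t_1,\dots,t_L=w(a,b)(t)$ moves through a prescribed chain of pairwise disjoint dyadic subintervals, forcing $t_L\ne t_0$.

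To realize the required letter-by-letter motion, I would fix an integer $N$ larger than the length $L$ of $w$ and a chain of pairwise disjoint dyadic intervals $J_0,J_1,\dots,J_N\subset(0,1)$, and then invoke Lemma \ref{lem1} to build $a,b\in F$, trivial in neighborhoods of $0$ and $1$, whose breakpoints are arranged so that each generator and its inverse carries the relevant $J_i$ onto the neighbor dictated by the letter being read. Since $w$ is reduced, consecutive letters never form a cancelling pair, so the trajectory is never immediately reversed; by placing the successive intervals at genuinely different scales along $[0,1]$ I would ensure that no later letter can undo the displacement produced by an earlier one, so the trajectory cannot return to $J_0$ within $L$ steps. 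The self-similarity of $F$, namely that its restriction to a dyadic subinterval is again a copy of $F$, makes such a multi-scale construction available.

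The main obstacle is precisely the control of cancellation. By Corollary 4.7 and Corollary 4.9 of \cite{CFP}, the group $F$ has exponential growth yet contains no non-abelian free subgroup, so no global ping-pong argument can produce a free pair $a,b$; any fixed pair must satisfy some relation eventually. The construction must therefore exploit that only the finitely many letters of $w$ are ever applied: by assigning one scale to each position in the word I would guarantee that cancellation cannot occur before the word is exhausted, even though it must occur for longer words. Verifying that the prescribed breakpoints can be realized simultaneously for both $a$ and $b$, and that the induced motion on the marked point is monotone enough to preclude return, is the technical heart of the argument. Once $w(a,b)(t)\ne t$ is established for some $t$, we conclude $w(a,b)\ne 1$, contradicting that $w$ is an identity, and Lemma \ref{lem4} follows.
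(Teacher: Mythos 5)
Your skeleton is the same as the paper's: reduce to two variables by Lemma \ref{lem3}, then, for a given reduced word $w$ of length $k$, build a pair $a,b\in F$ (depending on $w$) and a marked point whose letter-by-letter trajectory certifies $w(a,b)\neq 1$, using Lemma \ref{lem1} to promote partially prescribed data to actual elements of $F$. (Your observation that no fixed pair can work for all words, via Corollary 4.9 of \cite{CFP}, is correct but plays no role; also, choosing $a,b$ per word is just the negation of the definition of an identity and needs no appeal to Lemma \ref{lem2}.)

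The problem is that the step you explicitly defer as ``the technical heart'' --- checking that the finitely many constraints imposed on $a$ and on $b$ by the different occurrences of the same letter are simultaneously realizable by increasing homeomorphisms, and that the trajectory ends away from its start --- is precisely the entire content of the paper's proof, and the mechanism you propose for it (disjoint intervals ``at genuinely different scales'' so that ``no later letter can undo the displacement'') is not an argument: disjointness and differing scales do not by themselves make the partial constraints order-compatible. The paper's resolution is both simpler and complete: take dyadic points $\beta_1<\beta_2<\dots<\beta_{k+1}$ in increasing order and require the $i$-th letter of $w$ to send $\beta_i$ to $\beta_{i+1}$, so every letter moves the marked point strictly to the right. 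Then non-return is automatic, since $w(a,b)(\beta_1)=\beta_{k+1}>\beta_1$; and the partial maps prescribed for $a$ and for $b$ are well defined, injective and increasing \emph{because $w$ is reduced}: one checks that a repeated source, a repeated target, or an order reversal among the constraint pairs can only come from two adjacent letters of the form $a^{\epsilon}a^{-\epsilon}$ or $b^{\epsilon}b^{-\epsilon}$, which reducedness forbids, while all non-adjacent occurrences automatically respect the order because the $\beta_i$ increase along the word. With that verified, Lemma \ref{lem1} extends the partial data to elements of $F$ and the proof closes. Until you supply this consistency check (or a worked-out substitute for your multi-scale scheme), your construction is a plan rather than a proof.
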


\section{Some examples and definitions}
Let $x_0$ and $x_1$ be the following elements of the group $F$:
$$
x_0 (x) = \left \{
 \begin {array} {ll}
 x/2, & 0 \leq x \leq 1/2 \\
 x-1/4, & 1/2 \leq x \leq 3/4 \\
 2x-1, & 3/4 \leq x \leq 1 \\
 \end {array}
 \right.
$$
$$
x_1 (x) = \left \{
 \begin {array} {ll}
 x, & 0 \leq x \leq 1/2 \\
 x/2+1/4, & 1/2 \leq x \leq 3/4 \\
 x-1/8, & 3/4 \leq x \leq 7/8 \\
 2x-1, & 7/8 \leq x \leq 1 \\
 \end {array}
 \right.
$$
It is convenient to represent the elements of Thompson's group $F$ as rectangular diagrams. For example, $x_0$ and $x_1$ can be represented as follows:
$$
\xa
$$
$$
\xb
$$
Here the upper side of the rectangle is mapped onto the lower one. For instance, take the product of $x_1$ and $x_0^{-1}$:
$$
\xc
$$
Then the rectangular diagram for $x_1x_0^{-1}$ has the form:
$$
\xd
$$

Let us construct a special partition of the unit interval $[0,1]$. We subdivide $[0,1]$ into infinitely many pieces of the form $[1/2 ^ {k+1}, 1/2^k]$,
$[1-1/2^k, 1-1/2 ^ {k+1}]$, where $k \geq 1$ is an integer. This subdivision is shown on the figure below:
$$
\xe
$$

The element $x_0$ shifts these pieces of this subdivision to the right:
$$
\dots,  [\frac{1}{8},\frac{1}{4}]\rightarrow [\frac{1}{4},\frac{1}{2}], \ [\frac{1}{4},\frac{1}{2}]\rightarrow [\frac{1}{2},\frac{3}{4}], \
[\frac{1}{2},\frac{3}{4}]\rightarrow [\frac{3}{4},\frac{7}{8}],\ [\frac{3}{4},\frac{7}{8}]\rightarrow [\frac{7}{8},\frac{15}{16}], \dots
$$

The element $x_0^{-1}$ shifts the pieces of the subdivision to the left:
$$
\dots, [\frac{1}{8},\frac{1}{4}]\leftarrow [\frac{1}{4},\frac{1}{2}],\ [\frac{1}{4},\frac{1}{2}]\leftarrow [\frac{1}{2},\frac{3}{4}],\ [\frac{1}{2},\frac{3}{4}]\leftarrow [\frac{3}{4},\frac{7}{8}],\ [\frac{3}{4},\frac{7}{8}]\leftarrow [\frac{7}{8},\frac{15}{16}],\ \dots
$$

\section{Proof of Lemma \ref{lem2}}

We first prove that (\ref{lem2it1}) implies (\ref{lem2it2}). Take an arbitrary reduced word $f$ in $n$ variables and of length $l$. Then there exist elements $a_{l+1,1}, \dots, a_{l+1,n} \in G$ such that $f (a_{l+1,1}, \dots, a_{l+1, n}) \neq 1 $. Hence $f$ is not an identity.

We now prove the converse. Let us assume, that the group $G$ does not satisfy any identities. Assume the contrary, i.e. that condition (\ref{lem2it1}) from Lemma \ref{lem2} does not hold. Then there exist natural numbers $n$ and $k$, such that any $n$ elements $a_1,\dots,a_n\in G$ satisfy a relation of length less then $k$. Consider the set of all such relations, that is the set of all reduced words (one can treat these words as elements of a free group) $B_k = \{f_1, \dots, f_{m_k}\}$ in $n$ letters and of length less than $k$.

Using the set $B_k$ we now construct an identity $h_{m_k}$, which is satisfied by the group $G$. Set $h_1=f_1$. If $[h_{i-1},f_i]=1$ in the free group, then there is word $w$, such that $h_{i-1}=w^{\alpha}$ and $f_i=w^{\beta}$ for some $\alpha$ and $\beta$ in $\mathbb{Z}$. Define $h_i=w^{\alpha\beta}$. If $[h_{i-1},f_i]\neq 1$ in the free group, then set $h_i=[h_{i-1},f_i]$. Consider the word $h_{m_k}$. For any $n$-tuple of elements $a_1,\dots,a_n\in G$ we have: $h_{m_k}(a_1,\dots,a_n)=1$. It follows that $h_{m_k}$ is an identity on $G$ - a contradiction.

\section{Identities in Thompson's group $F$}
By Lemma \ref{lem3}, it suffices to show that $F$ does not have an identity in two variables. Let $w(x,y)=w_k\dots w_2w_1$ be an arbitrary reduced non-trivial word in $x$ and $y$ of length $k$. That is $w_i \in \{x, x ^ {-1}, y, y ^ {-1} \} $, $1 \leq i \leq k $. We shall construct  elements $a$ and $b$ of Thompson's group $F$, for which $w (a, b) \neq 1 $.

Consider a partitioning $\beta_1 <\beta_2 <\dots <\beta _ {k+1}$ of $[0,1]$, where $\beta_i$ is a dyadic number. We represent such partitions of $[0,1]$ by cells. To differentiate such presentation from rectangular diagrams we draw the left side of the rectangle as a sinuous line:
$$
\xi
$$
\vspace {1mm}

Let $w(a,b) =u_k \dots u_2u_1 $, where $u_i \in \{a,a^{-1},b,b^{-1} \}$, $1\leq i\leq n$. Set $u_1(\beta_1) = \beta_2, \dots, u_k(\beta_k)=\beta_{k+1}$. Since the word $w(a,b)$ does not contain subwords of the form $a^{\epsilon}a^{-\epsilon}$, $b^\epsilon b^{-\epsilon}$, $\epsilon=\pm 1$, it follows that $a$ and $b$ are increasing functions on points on which they are defined.

\begin{expl}
Let $w (x, y) =y ^ {-1} x ^ {-1} yx $. Then the partially defined maps $a$ and $b$ have the following form:
$$
\xj
$$
\vspace {1mm}
$$
\xk
$$
\vspace {1mm}

The corresponding diagram for the word $w(a,b)=b^{-1}a^{-1}ba$ takes the form:
$$
\xl
$$
\end{expl}
\vspace {1mm}

We use Lemma \ref{lem1} to define the functions $a$ and $b$ on the whole segment $[0,1]$.

Thus we have constructed two functions $a$ and $b$ so that  $w(a,b)(\beta_1)=\beta_{k+1}>\beta_1$. Therefore $w(a,b)$ is not the identity map, hence it does not represent the identity element in Thompson's group $F$. This finishes our proof of Lemma \ref{lem4}.

Theorem \ref{thm1} follows from Lemmas \ref{lem2} and \ref{lem4} and the fact that Thomson's group $F$ is torsion-free.

\section*{Appendix A. An alternative proof of Lemma \ref{lem1}}

Consider two adjacent points $x_i$ and $x_{i+1}$ of the partition $0=x_0<x_1<x_2<\dots <x_n=1$. Define the function $f$ on the segment $[x_i,x_{i+1}]$ as follows
$$
\xf
$$

Let $x_{i+1}-x_i=c_1$, $y_{i+1}-y_i=c_2$. Without loss of generality we may assume that $c_1<c_2$, where $c_1$ and $c_2$ are diadic numbers, that is
$c_1=\frac{t_1} {2^{j_1}}$, $c_2 = \frac{t_2}{2^{j_2}}$. Then $z_1=t_1 2^{j_2} $, $z_2=t_2 2^{j_1} $ are integers. It follows that
$d=\frac{z_1-1}{z_1}c_1=\frac{t_12^{j_2}-1}{t_12^{j_2}}\frac{t_1} {2^{j_1}}$ is a diadic number and $d<c_1$. Define the function $f$ on the segment
$[x_i,x_i+d]$ as a linear function, whose slope equals $1$:
$$
\xg
$$

The ratio of lengths of the remaining pieces equals
$$
\frac{c_2-d}{c_1-d}=\frac{z_2\frac{c_1}{z_1}-(z_1-1)\frac{c_1}{z_1}}{c_1-(z_1-1)\frac{c_1}{z_1}}=\frac {z_2-z_1+1}{1}.
$$
Write this number as a sum of powers of $2$, $z_2-z_1+1=2^{k_1} + \dots+2^{k_n} $, where $k_1,\dots,k_n$ are non-negative integers. Proportionally to this decomposition, we subdivide the segment $[y_i+d,y_{i+1}]$ into $m$ pieces. Furthermore, write $1$ as a sum of powers of $2$,
$1=2^{k'_1}+\dots+2^{k'_n}$, where $k'_1,\dots,k'_n $ are negative integers. We then subdivide proportionally the segment $[x_i+d,x_{i+1}]$ into $m$ pieces. We use the above defined partitions and define the function $f$ on the segment $[x_i+d,x_{i+1}]$ as a piecewise linear function.
It follows that the slopes are powers of $2$:
$$
\xh
$$

Construct the function $f$ for all pairs $x_i,x_{i+1}$, $i=0,\dots,n$ as above. It follows that the function $f$ is defined on the whole segment $[0,1]$. This finishes our proof of Lemma \ref{lem1}

\end{document}